\documentclass[letterpaper, 10 pt, conference]{ieeeconf} %
\pdfoutput=1
\IEEEoverridecommandlockouts
\overrideIEEEmargins


\usepackage{cite}

\usepackage{amsthm,amsmath,amssymb,amsfonts}
\usepackage{bm}  
\usepackage{algorithmic}
\usepackage{algorithm}

\usepackage{graphicx}
\graphicspath{{figures/}}
\usepackage{xcolor}

\usepackage{array}

\makeatletter
\let\MYcaption\@makecaption
\makeatother

\usepackage[font=footnotesize]{subcaption}

\makeatletter
\let\@makecaption\MYcaption
\makeatother

\usepackage{textcomp}
\def\BibTeX{{\rm B\kern-.05em{\sc i\kern-.025em b}\kern-.08em
    T\kern-.1667em\lower.7ex\hbox{E}\kern-.125emX}}


%

\makeatletter
\def\input@path{{tex_files/}}
\makeatother


\theoremstyle{plain}
\newtheorem{theorem}{Theorem}
\newtheorem{lemma}{Lemma}
\newtheorem{corollary}{Corollary}

\theoremstyle{definition}

\def\({\left(}
\def\){\right)}
\def\[{\left[}
\def\]{\right]}

  \def\dbf{{\bf d}}

\def\Kbf{{\bf K}}

\def\Rbf{{\bf R}}

  \def\ubf{{\bf u}}

  \def\xbf{{\bf x}}
  \def\ybf{{\bf y}}

\def\Xibf{{\bm{\Xi}}}

\def\Phibf{\bm{\Phi}}

\def\0bf{{\bf 0}}
\def\1bf{{\bf 1}}

\def\Rmbb{\mathbb{R}}

\def\Acal{\mathcal{A}}

\def\Hcal{\mathcal{H}}

\def\Rcal{\mathcal{R}}  
\def\Scal{\mathcal{S}}

\newif\ifshowWriterComment
\newcommand\writercomment[3]{\expandafter\newcommand\csname #2\endcsname[1]{\ifshowWriterComment{\color{#3} (#1: ##1)}\fi}}

\writercomment{Shih-Hao}{SHT}{blue}
\writercomment{Lauren}{LC}{green}

\def\Pbfxx{\Phibf_{\xbf\xbf}}
\def\Pbfux{\Phibf_{\ubf\xbf}}
\def\Pbfxy{\Phibf_{\xbf\ybf}}
\def\Pbfuy{\Phibf_{\ubf\ybf}}

\def\Pxx{\Phi_{xx}}
\def\Pux{\Phi_{ux}}
\def\Pxy{\Phi_{xy}}
\def\Puy{\Phi_{uy}}

\newcommand{\OFqple}{\{\Pbfxx, \Pbfux, \Pbfxy, \Pbfuy\}}


\newcommand{\norm}[1]{\left\lVert #1 \right\rVert}
\def\nul#1{\text{\rm null}\left(#1\right)}
\def\basis#1{\text{\rm basis}\left(#1\right)}
\def\normalize#1{\text{\rm normalize}\left(#1\right)}

\def\tdA{\tilde{A}}
\def\tdB{\tilde{B}}

\renewcommand{\paragraph}[1]{\vspace*{0.3\baselineskip}\noindent {\bf #1:} }

\DeclareMathOperator*{\argmin}{argmin}

\renewcommand{\vec}[1]{\overrightarrow{#1}}
\newcommand{\unvec}[1]{\overleftarrow{#1}}

\DeclareMathOperator{\xx}{\phi_{xx}}
\DeclareMathOperator{\xy}{\phi_{xy}}
\DeclareMathOperator{\ux}{\phi_{ux}}
\DeclareMathOperator{\uy}{\phi_{uy}}
\DeclareMathOperator{\nullspace}{null}

\def\mat#1{\begin{bmatrix}#1\end{bmatrix}}

\def\alg#1{Algorithm~\ref{alg:#1}}

\def\fig#1{Fig.~\ref{fig:#1}}

\def\lem#1{Lemma~\ref{lem:#1}}

\def\sec#1{Section~\ref{sec:#1}}

\def\thm#1{Theorem~\ref{thm:#1}}
\def\eqn#1{\eqref{eqn:#1}}

\def\st{{\rm s.t.}}
\def\OptConsSep{&&\quad}

\makeatletter
\newcommand{\OptMin}{\@ifstar\OptMinStar\OptMinNoStar}
\makeatother
\newcommand{\OptMinStar}[3][]{%
\ifx\\#1\\ \else\begin{subequations}\label{eqn:#1}\fi%
\begin{alignat}{2}%
\min\ &\ #2 \nonumber \\%
\st\ #3 %
\end{alignat}%
\ifx\\#1\\ \else\end{subequations}\fi%
}
\newcommand{\OptMinNoStar}[3][]{%
\ifx\\#1\\ \else\begin{subequations}\label{eqn:#1}\fi%
\begin{alignat}{2}%
\min\ &\ #2 \ifx\\#1\\ \nonumber \else \tag{\ref{eqn:#1}}\label{eqnset:#1} \fi \\%
\st\ #3 %
\end{alignat}%
\ifx\\#1\\ \else\end{subequations}\fi%
}

\newcommand\OptCons[3]{%
&\ #1 %
\ifx\\#2\\ \else \OptConsSep #2 \fi%
\ifx\\#3\\ \nonumber \else \label{eqn:#3} \fi%
}

\newcommand\OptConsN[2]{%
&\ #1 %
\ifx\\#2\\ \else \OptConsSep #2 \fi%
}

\title{\LARGE \bf Output-Feedback System Level Synthesis via Dynamic Programming}

\pdfoutput=1
\author{Lauren E. Conger and Shih-Hao Tseng
\thanks{Lauren E. Conger and Shih-Hao Tseng are with the Division of Engineering and Applied Science, California Institute of Technology, Pasadena, CA 91125, USA.  Emails: {\tt\small \{lconger,shtseng\}@caltech.edu}}
}

\showWriterCommenttrue

\begin{document}

\maketitle
\thispagestyle{empty}
\pagestyle{empty}

\bstctlcite{IEEE_BSTcontrol}

\begin{abstract}
System Level Synthesis (SLS) allows us to construct internally stabilizing controllers for large-scale systems. However, solving large-scale SLS problems is computationally expensive and the state-of-the-art methods consider only state feedback; 
output feedback poses additional challenges because the constraints are no longer uniquely row or column separable.

We exploit the structure of the output-feedback SLS problem by vectorizing the multi-sided matrix multiplications in the SLS optimization constraints, which allows us to reformulate it as a discrete-time control problem and solve using two stages of dynamic programming (DP). Additionally, we derive an approximation algorithm that offers a faster runtime 
by partially enforcing the constraints, and show that this algorithm offers the same results. DP solves SLS up to $7$ times faster, with an additional $42\%$ to $68\%$ 
improvement using the approximation algorithm, than a convex program solver, and scales with large state dimensions 
and finite impulse response horizon.



\end{abstract}

\section{Introduction}\label{sec:introduction}

In classical control theory, transfer functions and controllers are constructed to minimize a variety of cost functions, primarily functions of the state and input \cite{doyle1992feedback} \cite{zhou1996robust}. These 
celebrated techniques have been proven useful for state-feedback small-scale systems. However, in modern applications of linear control, especially for large-scale systems, the full state of a system is often unknown or too expensive to acquire within a reasonable amount of time. 
As a result, we rely on partial observations of the state to control the system in an output-feedback fashion. 
Not only does such a proxied information structure make output-feedback control difficult, but it also hinders one in incorporating additional system level constraints.
To address these challenges, system level synthesis (SLS) \cite{anderson2019system, wang2019system} 
derives its system level parameterization of output-feedback controllers as a convex optimization problem that
 maintains system-level constraint enforcement. This formulation can be used to solve the linear-quadratic Gaussian problem and optimization under the $\Hcal_2$ norm as shown in \cite{anderson2019system}. With SLS, we can 
 solve these traditional control problems with added system-level constraints, which is not possible with the Youla or Input-Output parameterizations \cite{furieri2019input} \cite{sabau2014youla} \cite{youla1976modern2}.

Although convex optimization problems are in general tractable, it is still slow to solve the SLS problems for output-feedback settings.
Existing work proposes to accelerate the SLS computation by exploiting the structure of underlying systems, and an early work \cite{wang2018separable} has established that an SLS problem can be solved by ADMM in theory if the problem is partially separable -- for both the constraints and the objective. In practice, most work primarily focuses on state-feedback settings \cite{anderson2017structured,matni2017scalable,amo-alonso2020distributed,amo-alonsosubeffective} since the state-feedback SLS constraints are column-wise separable and we only need to ensure the objective is also column-wise separable. Output-feedback SLS is much more complicated as its constraints consist of both column-wise and row-wise separable parts, which require the objective to be both column-wise and row-wise separable for ADMM to work. As a consequence, we end up relying on the solver to deal with general non-separable output-feedback problems.

Some recent work provides an alternative approach to the SLS computation. \cite{tseng2020system} shows that a state-feedback SLS problem can be reformulated as a discrete-time control problem, which admits efficient computation via dynamic programming (DP). This approach could potentially be applied to output-feedback SLS as long as we could address the following discrepancy between the state-feedback and output-feedback SLS. First, the state-feedback SLS system response comprises only two transfer matrices, which are then conveniently set as the state and the control in \cite{tseng2020system}, while the output-feedback version has four matrices to handle. Also, the system dynamics matrices are multiplied at the left in state-feedback constraints, while the output-feedback formulation have both left and right matrix multiplications in its constraints. Further, we have only one initial condition in the state-feedback SLS while the initial condition has a much more complicated structure in the output-feedback setting. 

\subsection{Contributions and Organization}
We address the aforementioned challenges and extend the approach in \cite{tseng2020system} to derive a DP algorithm for output-feedback SLS by reformulating it as a discrete-time control problem. By our choice of the corresponding state and control variables, the control problem consists of a new transition constraint and initial condition compared to the state-feedback scenario. By vectorizing the multi-sided matrix multiplications, we can solve the control problem by a two-stage DP procedure. We develop the full DP and an approximation algorithm by skipping the constraint enforcement up to some \emph{allowance}. Explicitly, we derive the algorithm for $\Hcal_2$ and quadratic objectives. By comparing the computing time with the CVX-based solver \cite{SLSpy}, we show that DP perform up to $7$ times faster. Also, the approximation algorithm can further shorten the computing time by $42\%$ to $68\%$ with mild accuracy degradation.

The paper is organized as follows. We begin with brief introductions of output-feedback SLS and DP in \sec{preliminaries}. 
Then, we present our DP algorithms to solve and approximate output-feedback SLS in \sec{DP_algorithm}. In \sec{examples}, 
the $\Hcal_2$ and quadratic costs are explicity derived, and we numerically evaluate our algorithms in \sec{evaluation}. 
We conclude the paper in \sec{conclusion}.

\subsection{Notation} 

\def\Rp{\Rcal_{p}}
\def\Rsp{\Rcal_{sp}}
\def\RHinf{\Rcal\Hcal_{\infty}}

Let $z^{-1}\RHinf$ 
and $\RHinf$ denote the set of strictly proper 
and stable proper transfer matrices, respectively, all defined according to the underlying setting, continuous or discrete.
Lower- and upper-case letters (such as $x$ and $A$) denote vectors and matrices respectively, while bold lower- and upper-case characters and symbols (such as $\ubf$ and $\Rbf$) are reserved for signals and transfer matrices. $I$ and $0$ are the identity matrix and all-zero matrix/vector (with dimensions defined according to the context).
We denote by $A^+$ the pseudo inverse (Moore-Penrose inverse) of $A$ and $\normalize{A}$ the matrix containing all normalized non-zero rows in $A$. Let $\vec{A}$ be the vectorized matrix $A$, which stacks the columns of $A$, and let $\unvec{x}$ be the inverse operation such that $x = \vec{A}$ is equivalent to $A = \unvec{x}$. The null space of a matrix $\Psi$ is written as $\nul{\Psi} = \{v : \Psi v = 0\}$, where $0$ is an all-zero vector. We denote by $\basis{\Scal}$ a matrix where its columns form a basis that spans the linear (sub)space $\Scal$.

\section{Preliminaries}\label{sec:preliminaries}  

\subsection{Output-Feedback System Level Synthesis (SLS)}

System level synthesis (SLS) allows us to design a system's closed-loop response by solving a convex optimization problem \cite{wang2019system,anderson2019system}. For an output-feedback system evolving according to the following dynamics
\begin{align*}
x[t+1] =&\ A x[t] + B u[t] + d_x[t] \\
y[t] =&\ C x[t] + D u[t] + d_y[t]
\end{align*}
where $x[t]$ is the state, $u[t]$ the control, $y[t]$ the output (measurement), and $d_x[t]/d_y[t]$ the disturbances at time $t$, the closed-loop system is parameterized by
\begin{align*}
    \begin{bmatrix}
        \xbf \\ \ubf
    \end{bmatrix} = 
    \begin{bmatrix}
        \Pbfxx & \Pbfxy \\ \Pbfux & \Pbfuy
    \end{bmatrix} 
    \begin{bmatrix}
        \dbf_x \\ \dbf_y
    \end{bmatrix}.
\end{align*}

SLS synthesizes an internally stabilizing controller $\Kbf$ in the frequency domain by solving the convex optimization
\OptMin[SLS_output_feedback]{ 
g(\Pbfxx,\Pbfxy,\Pbfux,\Pbfuy)
}{
\OptCons{
\mat{zI-A & -B}
\mat{
\Pbfxx & \Pbfxy \\
\Pbfux & \Pbfuy
}
=
\mat{I & 0},
}{}{}\\
\OptCons{
\mat{
\Pbfxx & \Pbfxy \\
\Pbfux & \Pbfuy
}
\mat{zI-A \\ -C}
=
\mat{I \\ 0},
}{}{}\\
\OptCons{
\Pbfxx,\Pbfxy,\Pbfux \in z^{-1}\RHinf, \Pbfuy \in \RHinf
}{}{stability-constraint}\\
\OptCons{
\mat{
\Pbfxx & \Pbfxy \\
\Pbfux & \Pbfuy
} \in \Scal,
}{}{}
}
where $g$ is the objective and $\Scal$ the set of system level constraints.
We call $\OFqple$ the system response, and a solution to \eqn{SLS_output_feedback} leads to the desired controller \cite[Corollary 5]{tseng2021realization}
\begin{align*}
\Kbf = \Kbf_0 \left( I + D\Kbf_0 \right)^{-1}
\end{align*}
where $\Kbf_0 = \Pbfuy - \Pbfux \Pbfxx^{-1} \Pbfxy$.
The control then follows from $\ubf = \Kbf \ybf$ where $\ubf$ and $\ybf$ are the frequency domain signals of the control $u$ and output $y$.

In this paper, we consider finite impulse response (FIR) solutions with horizon $T$, i.e.,
\begin{align*}
\Pbfxx = \sum\limits_{\tau = 1}^T z^{-\tau} \Pxx[\tau], \quad
\Pbfxy = \sum\limits_{\tau = 1}^T z^{-\tau} \Pxy[\tau], \\
\Pbfux = \sum\limits_{\tau = 1}^T z^{-\tau} \Pux[\tau], \quad
\Pbfuy = \sum\limits_{\tau = 0}^T z^{-\tau} \Puy[\tau],
\end{align*}
where $\Phi_{\bullet}[\tau]$ are the corresponding spectral components.
Notice that the summation limit is different for $\Pbfuy$ due to \eqn{stability-constraint}. Further, we assume that the objective $g$ is a finite sum of per-step costs:
\begin{align*}
&\ g(\Pbfxx,\Pbfxy,\Pbfux,\Pbfuy)\\
=&\ \sum\limits_{\tau = 0}^{T} g_{\tau}(\Pxx[\tau],\Pxy[\tau],\Pux[\tau],\Puy[\tau]).
\end{align*}

\subsection{Dynamic Programming (DP)}\label{sec:preliminaries-DP}

Dynamic programming (DP) breaks down a cost-optimization problem into a series of subproblems recursively correlated to each other. The optimal solution to the original problem can then be obtained by iteratively solving the subproblems. Such a breakdown is possible especially when the overall cost $h$ is the sum of per-step costs $h_\tau$, e.g.,
\begin{align*}
h(\xbf,\ubf) = \sum\limits_{\tau=0}^T h_\tau(x[\tau],u[\tau]).
\end{align*}
Letting $x[\tau + 1] = f(x[\tau], u[\tau])$, DP then derives the cost-to-go functions $V_\tau(x[\tau])$ from the Bellman equation:
\begin{align*}
V_\tau(x[\tau]) = \min\limits_{\hat{u} \in \Acal_u[\tau]} h_\tau(x[\tau],\hat{u}) + V_{\tau + 1}(f(x[\tau],\hat{u}))
\end{align*}
where $\Acal_u[\tau]$ is the set of admissible $u[\tau]$ and $V_{T+1}(x[\tau]) = 0$. As such, given the initial condition $x[0]$, the optimal cost of the original problem is $V_0(x[0])$, and the optimal $u[\tau]$ is the one achieving the minimum of the cost-to-go function $V_\tau(x[\tau])$, i.e.,
\begin{align}
u[\tau] =&\ \argmin\limits_{\hat{u} \in \Acal_u[\tau]} h_\tau(x[\tau],\hat{u}) + V_{\tau + 1}(f(x[\tau],\hat{u})) \nonumber \\
=&\ K_\tau(x[\tau]).
\label{eqn:DP-U}
\end{align}
Meanwhile, we can also use the above $u[\tau]$ to express $V_\tau(x[\tau])$ as
\begin{align}
V_\tau(x[\tau]) = h_\tau(x[\tau],u[\tau]) + V_{\tau + 1}(f(x[\tau],u[\tau])).
\label{eqn:DP-cost-to-go}
\end{align}



\subsection{Useful Lemmas}
Below are some useful lemmas that we will use in our derivations later. We omit the proofs as they are all trivial.

\begin{lemma}[Lemma 1 in \cite{tseng2020system}]\label{lem:null-space-is-subspace}
Given a matrix $\Psi$, $\nul{\Psi}$ is a subspace and there exists some matrix $\Xi = \basis{\nul{\Psi}}$.
\end{lemma}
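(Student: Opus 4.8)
The plan is to verify directly that $\nul{\Psi}$ satisfies the three defining properties of a linear subspace, and then to invoke the standard fact that every subspace of a finite-dimensional Euclidean space possesses a basis.

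First I would note that $\nul{\Psi}$ is nonempty, since $\Psi\, 0 = 0$ gives $0 \in \nul{\Psi}$. Then, for closure under addition, I take arbitrary $v_1, v_2 \in \nul{\Psi}$ and compute $\Psi(v_1 + v_2) = \Psi v_1 + \Psi v_2 = 0$, so $v_1 + v_2 \in \nul{\Psi}$. For closure under scalar multiplication, I take $v \in \nul{\Psi}$ and a scalar $c$ and compute $\Psi(c v) = c\,(\Psi v) = 0$, so $c v \in \nul{\Psi}$. These three observations together establish that $\nul{\Psi}$ is a linear subspace of the ambient space on which $\Psi$ acts.

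For the existence of $\Xi$, I would appeal to the elementary result that any subspace of a finite-dimensional vector space admits a finite basis $\{\xi_1, \dots, \xi_k\}$ (for instance, by reducing a finite spanning set, or by extending the empty set one independent vector at a time). Stacking these vectors as columns yields a matrix $\Xi$ with $\Xi = \basis{\nul{\Psi}}$. In the degenerate case $\nul{\Psi} = \{0\}$ the basis is empty and $\Xi$ is the (vacuous) empty matrix, which we adopt by convention so that $\basis{\cdot}$ is well defined on the trivial subspace.

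I do not anticipate any genuine obstacle: each step is a one-line verification or a citation of standard finite-dimensional linear algebra, which is precisely why the paper defers the proof as trivial. The only point meriting a word of care is fixing the convention for the zero subspace so that the notation $\basis{\cdot}$ is unambiguous there; this is purely notational and does not affect any later use of the lemma.
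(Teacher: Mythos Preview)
Your proposal is correct and is exactly the kind of routine verification the paper has in mind; the paper itself omits the proof as trivial, so your direct check of the subspace axioms followed by the standard existence-of-a-basis argument matches the intended approach.
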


\begin{lemma}[Lemma 2 in \cite{tseng2020system}]\label{lem:union-of-null-space}
The intersection of $\nul{\Psi_a}$ and $\nul{\Psi_b}$ is $\nul{\mat{\Psi_a\\ \Psi_b}}$.
\end{lemma}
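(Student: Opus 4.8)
The plan is to establish the set equality $\nul{\Psi_a} \cap \nul{\Psi_b} = \nul{\mat{\Psi_a \\ \Psi_b}}$ by the standard double-inclusion argument, using nothing more than the block structure of the stacked matrix. First I would record the one implicit hypothesis: for $\mat{\Psi_a \\ \Psi_b}$ to be well-defined, $\Psi_a$ and $\Psi_b$ must have the same number of columns, so that every vector $v$ in the common domain can be multiplied by $\Psi_a$, by $\Psi_b$, and by the stacked matrix alike.

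For the forward inclusion, I would take an arbitrary $v \in \nul{\Psi_a} \cap \nul{\Psi_b}$, which by definition means $\Psi_a v = 0$ and $\Psi_b v = 0$ simultaneously. Stacking these two identities gives $\mat{\Psi_a \\ \Psi_b} v = \mat{\Psi_a v \\ \Psi_b v} = \mat{0 \\ 0} = 0$, so $v \in \nul{\mat{\Psi_a \\ \Psi_b}}$.

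For the reverse inclusion, I would take an arbitrary $v \in \nul{\mat{\Psi_a \\ \Psi_b}}$ and expand the block product: $0 = \mat{\Psi_a \\ \Psi_b} v = \mat{\Psi_a v \\ \Psi_b v}$ forces each block to vanish, i.e., $\Psi_a v = 0$ and $\Psi_b v = 0$, so $v \in \nul{\Psi_a}$ and $v \in \nul{\Psi_b}$, hence $v$ is in their intersection. Combining the two inclusions yields the claimed equality.

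Since each direction is a one-line block-matrix manipulation, there is no genuine obstacle here; the only point requiring any care is the column-dimension compatibility noted above. If desired, I would also remark that the identical reasoning extends verbatim to a vertical stack of any finite number of matrices sharing a common column count, which is the form in which the lemma will be applied later together with \lem{null-space-is-subspace}.
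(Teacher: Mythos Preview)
Your argument is correct: the double-inclusion via the block product $\mat{\Psi_a\\\Psi_b}v=\mat{\Psi_a v\\\Psi_b v}$ is exactly the expected elementary verification. The paper itself omits the proof, stating that it (along with the other preliminary lemmas) is trivial, so there is nothing further to compare against.
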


\begin{lemma}\label{lem:pinv-equal-regression}
Given matrices $\Gamma$ and $Z$, we have
\begin{align*}
\Gamma^+ Z = \argmin\limits_{M} \norm{\Gamma M - Z}.
\end{align*}
\end{lemma}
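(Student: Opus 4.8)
The plan is to establish the claim by the standard orthogonal‑projection (Pythagorean) argument for least squares, interpreting $\norm{\cdot}$ as the Frobenius norm so that the matrix problem decouples columnwise into ordinary Euclidean least‑squares problems. First I would invoke the defining Moore–Penrose identities $\Gamma\Gamma^+\Gamma = \Gamma$ and $\Gamma^+\Gamma\Gamma^+ = \Gamma^+$ together with the symmetry of $\Gamma\Gamma^+$ and $\Gamma^+\Gamma$; from these, $P := \Gamma\Gamma^+$ is the orthogonal projector onto the column space of $\Gamma$, and $I - P$ is the orthogonal projector onto its orthogonal complement.

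Next, for an arbitrary candidate $M$ I would split
\begin{equation*}
\Gamma M - Z = \left(\Gamma M - PZ\right) - \left(I - P\right)Z .
\end{equation*}
The first term has all its columns in the column space of $\Gamma$ while the second has all its columns in the orthogonal complement, so the two are orthogonal and the Pythagorean identity yields
\begin{equation*}
\norm{\Gamma M - Z}^2 = \norm{\Gamma M - PZ}^2 + \norm{\left(I - P\right)Z}^2 .
\end{equation*}
Since the last term is independent of $M$, any minimizer must drive $\norm{\Gamma M - PZ}$ to zero; and the identity $\Gamma\Gamma^+\Gamma = \Gamma$ shows that $M = \Gamma^+ Z$ does exactly this, because $\Gamma(\Gamma^+ Z) = (\Gamma\Gamma^+)Z = PZ$. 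Hence $\Gamma^+ Z$ attains the minimum, which is the asserted equality.

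The only delicate point is uniqueness of the minimizer: when $\Gamma$ lacks full column rank, the set of minimizers is the coset $\Gamma^+ Z + \nul{\Gamma}$ (cf.\ \lem{null-space-is-subspace}), so the ``$=$'' in the statement should be understood as selecting the distinguished minimum‑norm solution. I would close by noting that $\Gamma^+ Z = (\Gamma^+\Gamma)\Gamma^+ Z$ has its columns in the row space of $\Gamma$ and hence no component in $\nul{\Gamma}$, which pins it down. I do not expect any genuine obstacle — the argument is routine — the only thing to be careful about is stating precisely in what sense the $\argmin$ is to be read.
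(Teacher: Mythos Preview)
Your argument is correct and standard: the orthogonal decomposition via the projector $P=\Gamma\Gamma^+$ together with the Moore--Penrose identities establishes that $\Gamma^+Z$ attains the minimum, and your remark on how to read the $\argmin$ when $\Gamma$ is rank-deficient is the right caveat. There is nothing to compare against, however, since the paper explicitly omits the proof of this lemma as trivial; your write-up is thus more detailed than what the paper provides.
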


\begin{lemma}\label{lem:normalization}
Letting $\Psi$ be a matrix with at least one non-zero row, we have
$\nul{\Psi} = \nul{\normalize{\Psi}}$.
\end{lemma}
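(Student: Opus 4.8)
The plan is to reduce the claim to the elementary fact that membership in a null space is checked one row at a time, and that neither deleting an all-zero row nor rescaling a nonzero row by a strictly positive scalar changes the equations a vector must satisfy.

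First I would write $\Psi$ in terms of its rows, say $\Psi = \mat{\psi_1^\top \\ \vdots \\ \psi_m^\top}$, so that $v \in \nul{\Psi}$ if and only if $\psi_i^\top v = 0$ for every index $i \in \{1,\dots,m\}$. Since the hypothesis guarantees at least one $\psi_i \neq 0$, $\normalize{\Psi}$ is a well-defined, nonempty matrix whose rows are exactly $\psi_i^\top / \norm{\psi_i}$ over the indices $i$ with $\psi_i \neq 0$. Then I would verify the two inclusions. For $\nul{\Psi} \subseteq \nul{\normalize{\Psi}}$: if $\psi_i^\top v = 0$ for all $i$, it holds in particular for the retained (nonzero) rows, and dividing through by the positive number $\norm{\psi_i}$ preserves equality. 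For the reverse inclusion: if $v$ annihilates every normalized nonzero row, multiplying back by $\norm{\psi_i} > 0$ shows $\psi_i^\top v = 0$ for each nonzero row, while the deleted zero rows contribute the vacuous equation $0 = 0^\top v = 0$; hence all $m$ equations hold and $v \in \nul{\Psi}$. Combining the inclusions yields $\nul{\Psi} = \nul{\normalize{\Psi}}$.

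There is essentially no obstacle: the only point to track is that normalization divides each retained row by $\norm{\psi_i}$, which is strictly positive precisely because that row is nonzero, so the operation is invertible on every row that survives; the discarded rows impose no constraint in either matrix. This is why the paper lists the result among the trivial lemmas.
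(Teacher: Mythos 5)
Your proof is correct; the paper omits the argument entirely as trivial, and your row-by-row verification (zero rows impose vacuous constraints, positive rescaling of nonzero rows preserves each equation) is exactly the standard argument the authors have in mind.
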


\section{DP Algorithms for Output-Feedback SLS}\label{sec:DP_algorithm} 

In the following, we provide our DP algorithms to solve the output-feedback SLS \eqn{SLS_output_feedback} with the set of system level constraints ($\Scal$).
We first reformulate the SLS problem as a control problem, which then allows us to apply DP to derive explicit and approximation solutions efficiently.

\subsection{Control Problem Reformulation}
We can reformulate the SLS optimization \eqn{SLS_output_feedback} as a control problem by treating the system response $\OFqple$ as the state $\xbf$ and the input $\ubf$ of a linear system.
Accordingly, we can rewrite the cost function $g(\Pbfxx,\Pbfxy,\Pbfux,\Pbfuy)$ in terms of the state and input as $h(\xbf,\ubf)$, resulting in a state-feedback control problem. As such, we can apply DP to solve the control problem, which equivalently solves the original SLS optimization. Below, we describe the reformulation in detail.

Let $\Vec{A}$ be the stacked column vectors of a matrix $A$, we slightly abuse the notation to overwrite/define
\begin{align*}
x[\tau] = \mat{
x_{xx}[\tau] \\
x_{xy}[\tau] \\
x_{ux}[\tau]
}
= \mat{
\Vec{\Pxx[\tau]}\\ 
\Vec{\Pxy[\tau]}\\
\Vec{\Pux[\tau]}
}, \quad
u[\tau] = \Vec{\Puy[\tau]},
\end{align*}
and define the per-step cost $h_\tau$ to be
\begin{align*}
h_\tau(x[\tau],u[\tau]) = g_{\tau}(\Pxx[\tau],\Pxy[\tau],\Pux[\tau],\Puy[\tau]).
\end{align*}
Since we consider only FIR system response with horizon $T$, the constraint set in \eqn{SLS_output_feedback} can be written as
\begin{align*}
\Pxx[\tau + 1] =&\ \Pxx[\tau] A + \Pxy[\tau] C\\
\Pxy[\tau + 1] =&\ A \Pxy[\tau] + B \Puy[\tau]\\
\Pux[\tau + 1] =&\ \Pux[\tau] A + \Puy[\tau] C
\end{align*}
for all $\tau = 1,\dots,T - 1$, 
\begin{align*}
A \Pxx[\tau] + B \Pux[\tau] = \Pxx[\tau] A + \Pxy[\tau] C
\end{align*}
for all $\tau = 1,\dots,T$, and 
\begin{gather*}
\Pxx[0] = 0, \quad \Pxy[0] = 0, \quad \Pux[0] = 0,\\
\Pxx[1] = I, \quad \Pxy[1] = B \Puy[0], \quad \Pux[1] = \Puy[0] C
\end{gather*}
We can rewrite the above equations in terms of $x[\tau]$ and $u[\tau]$. Let $\Scal_x$ and $\Scal_u$ be sets such that if $\Phibf \in \Scal$ then $\xbf \in \Scal_x$ and $\ubf \in \Scal_u$. This reformulates \eqn{SLS_output_feedback} as 
\OptMin[reformulated_opt]{ 
\sum\limits_{\tau = 0}^T h_\tau(x[\tau],u[\tau])
}{
\OptCons{
x[\tau+1] = \tdA x[\tau] + \tdB u[\tau]
}{\forall \tau = 1,..., T-1 }{of-state-dynamics}\\
\OptCons{
\tdA_{eq} x[\tau] = 0
}{\forall \tau = 1,..., T }{of-transition-constraint}\\
\OptCons{
x[0] = 0, x[1] = \mat{\vec{I}\\ \tdB_0 u[0] }
}{}{of-initial-condition}\\
\OptCons{
\tilde{A} x[T] + \tilde{B} u[T] = 0
}{}{of-boundary-condition} \\
\OptCons{
x[\tau] \in \Scal_x[\tau],\  u[\tau] \in \Scal_u[\tau]
}{}{of-system-level-constraints}
}
where $\tdA$, $\tdB$, and $\tdA_{eq}$ are derived according to the SLS conditions. 

Notice that given $x[\tau]$ as the state and $u[\tau]$ as the control at time $\tau$, \eqn{reformulated_opt} can be seen as a discrete-time state-feedback control problem with state dynamics \eqn{of-state-dynamics}, transition constraint \eqn{of-transition-constraint}, initial condition \eqn{of-initial-condition}, boundary condition \eqn{of-boundary-condition}, and system-level constraints \eqn{of-system-level-constraints}.

\subsection{Explicit Solution by Dynamic Programming}
Since \eqn{reformulated_opt} is a state-feedback problem, we can adopt a similar procedure as in \cite{tseng2020system} to solve it. Specifically, we need to 
\begin{itemize}
\item {\it backward recursion:} recursively compute $V_\tau(x[\tau])$ backwards in $\tau$ based on \eqn{of-state-dynamics};
\item {\it transition constraint and boundary condition:} enforce \eqn{of-transition-constraint}, \eqn{of-boundary-condition}, and \eqn{of-system-level-constraints} throughout the derivation.
\item {\it initial condition:} enforce \eqn{of-initial-condition}.
\end{itemize}
In this paper, we show how to derive the DP procedure for unconstrained SLS, i.e., \eqn{reformulated_opt} without system-level constraints \eqn{of-system-level-constraints}, and we could easily extend our results here to handle entrywise linear \eqn{of-system-level-constraints} using similar techniques as in \cite[Section II C]{tseng2020system}.

We examine these three aspects below. 

\paragraph{Backward recursion}
For the backward recursion, we can set
\begin{align*}
f(x[\tau], u[\tau]) = \tdA x[\tau] + \tdB u[\tau]
\end{align*}
and apply the procedure in \sec{preliminaries-DP} to derive $V_\tau(x[\tau])$ and $u[\tau]$. 

\paragraph{Transition constraint, system constraint, and boundary condition}
We enforce \eqn{of-transition-constraint} and \eqn{of-boundary-condition} during the recursion, which requires maintaining the admissible input set $\Acal_u[\tau]$ as shown in \cite{tseng2020system}. A result in \cite{tseng2020system} iteratively shows that $x[\tau]$ lies in the null space of some matrix $\Psi_x[\tau]$ and derives $\Acal_u[\tau]$ accordingly. 
However, we cannot directly adopt the results from \cite{tseng2020system} to derive $\Acal_u[\tau]$ as we have the new condition \eqn{of-transition-constraint} to meet. 
Fortunately, \cite{tseng2020system} combines its Corollary 1 and equation (11) to deal with additional conditions for $x[\tau]$ (the entrywise linear condition), and we can borrow such a concept to derive the following theorem for our output-feedback scenario to handle the new condition \eqn{of-transition-constraint}.
\begin{theorem}\label{thm:feasible_set_unstable}
  Suppose $x[\tau+1] \in \nul{\Psi_x[\tau+1]}$ for some given matrix $\Psi_x[\tau+1]$. For $\tau \geq 1$, we have 
  \begin{align*}
    \Acal_u[\tau] = \{ \hat{u} : \hat{u} = H_x x[\tau] + H_\lambda \lambda \}
  \end{align*}
  where 
  \begin{align*}
    \Gamma =&\ \mat{-\tdB & \Xi_x[\tau + 1]}, \\
    H_x =&\ \mat{I & 0} \Gamma^+ \tdA, \\
    H_\lambda =&\ \mat{I & 0} (I - \Gamma^+ \Gamma),
  \end{align*}
  and $\Xi_x[\tau+1] = \basis{\nul{\Psi_x[\tau+1]}}$.

  Also, $x[\tau] \in \nul{\Psi_x[\tau]}$ where 
  \begin{align*}
    \Psi_x[\tau] = \mat{
      (\Gamma \Gamma^+ - I)\tdA\\
      \tdA_{eq}
    }.
  \end{align*}
\end{theorem}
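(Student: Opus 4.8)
The plan is to reduce the single-step feasibility requirement to a \emph{consistent linear equation}, read off the admissible inputs via the pseudoinverse, and then stack the resulting consistency condition with the transition constraint using Lemma \ref{lem:union-of-null-space}.

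First I would observe that, for a feasible state $x[\tau]$ (so in particular $\tdA_{eq} x[\tau] = 0$, which is available for $\tau \geq 1$ since \eqn{of-transition-constraint} holds for those $\tau$), an input $\hat u$ is admissible exactly when the successor state $\tdA x[\tau] + \tdB \hat u$ lies in $\nul{\Psi_x[\tau+1]}$. By Lemma \ref{lem:null-space-is-subspace} there is a basis matrix $\Xi_x[\tau+1] = \basis{\nul{\Psi_x[\tau+1]}}$, so this membership is equivalent to the existence of a coefficient vector $w$ with $\tdA x[\tau] + \tdB\hat u = \Xi_x[\tau+1] w$, i.e.
\begin{align*}
\Gamma \mat{\hat u \\ w} = \tdA x[\tau], \qquad \Gamma = \mat{-\tdB & \Xi_x[\tau+1]}.
\end{align*}
Hence $\hat u \in \Acal_u[\tau]$ iff $\hat u$ is the top block of some solution of this system in the unknown $(\hat u, w)$.

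Next I would apply the standard description of the solution set of a consistent linear system: when $\Gamma z = \tdA x[\tau]$ is solvable, its solutions are exactly $z = \Gamma^+ \tdA x[\tau] + (I - \Gamma^+\Gamma)\lambda$ over all $\lambda$, where $\Gamma^+ \tdA x[\tau]$ is a particular solution (cf.\ Lemma \ref{lem:pinv-equal-regression}, whose argmin attains zero residual in the consistent case) and $(I - \Gamma^+\Gamma)\lambda$ traces out $\nul{\Gamma}$. Left-multiplying by $\mat{I & 0}$ to extract the $\hat u$-block gives $\hat u = \mat{I&0}\Gamma^+\tdA x[\tau] + \mat{I&0}(I-\Gamma^+\Gamma)\lambda = H_x x[\tau] + H_\lambda \lambda$; since the $w$-block is unconstrained, every such $\hat u$ is admissible and, conversely, every admissible $\hat u$ arises this way, yielding the claimed form of $\Acal_u[\tau]$.

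Finally, for any admissible $\hat u$ to exist at all we need $\Gamma z = \tdA x[\tau]$ to be consistent, i.e.\ $\tdA x[\tau] \in \text{range}(\Gamma)$, equivalently $(\Gamma\Gamma^+ - I)\tdA x[\tau] = 0$. Combining this with the transition constraint $\tdA_{eq} x[\tau] = 0$ via Lemma \ref{lem:union-of-null-space} stacks the two null-space memberships into $x[\tau] \in \nul{\Psi_x[\tau]}$ with $\Psi_x[\tau] = \mat{(\Gamma\Gamma^+ - I)\tdA \\ \tdA_{eq}}$, as claimed. I expect the main obstacle to be bookkeeping rather than conceptual: verifying that the $\mat{I&0}$ projection of the pseudoinverse parametrization recovers $\Acal_u[\tau]$ \emph{exactly} (both inclusions), and that $(\Gamma\Gamma^+ - I)\tdA x[\tau] = 0$ is genuinely necessary and sufficient for feasibility so that $\nul{\Psi_x[\tau]}$ is tight and not merely an over-approximation; one should also confirm that $\Xi_x[\tau+1]$ has the stated range property so that $v \in \nul{\Psi_x[\tau+1]}$ and $v = \Xi_x[\tau+1] w$ for some $w$ are indeed equivalent.
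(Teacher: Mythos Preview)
Your proposal is correct and follows essentially the same route as the paper. The paper's own proof is terse: it invokes \cite[Theorem~1]{tseng2020system} for the parametrization of $\Acal_u[\tau]$ and the consistency condition $(\Gamma\Gamma^+ - I)\tdA x[\tau]=0$, and then appends the transition constraint $\tdA_{eq}x[\tau]=0$ via \lem{union-of-null-space}. Your argument simply unpacks that cited result---rewriting successor-state feasibility as the linear system $\Gamma\mat{\hat u\\ w}=\tdA x[\tau]$, applying the pseudoinverse parametrization, and reading off the consistency condition---so the two proofs coincide in substance, with yours being self-contained rather than deferring to the earlier paper.
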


\begin{proof}
Without condition \eqn{of-transition-constraint}, the former part follows directly from \cite[Corollary 1]{tseng2020system}. Since \eqn{of-transition-constraint} is equivalent to $x[\tau] \in \nul{\tdA_{eq}}$, the theorem follows from \lem{union-of-null-space}.
\end{proof}

Using \thm{feasible_set_unstable}, we can define $\Psi_x[T + 1] = I$ and backward-recursively derive each $\Acal_u[\tau]$ and $\Psi_x[\tau]$ to enforce \eqn{of-transition-constraint} and \eqn{of-boundary-condition}. Though \thm{feasible_set_unstable} works in theory, in practice the computation of the pseudo-inverse $\Gamma^+$ can hardly be done precisely. The numerical error then affects the precision of the matrix $\Psi_x$, which further disturbs the resulting basis $\Xi_x$ and causes numerical instability. Therefore, we propose the following theorem that leads to a more numerically stable derivation of $H_x$ and $H_\lambda$.

\begin{theorem}\label{thm:feasible_set}
  Let 
\begin{align*}
\Gamma_A = \Psi_x[\tau+1] \tdA, \quad \Gamma_B = \Psi_x[\tau+1] \tdB.
\end{align*}
  $H_x$ and $H_\lambda$ in \thm{feasible_set_unstable} can be alternatively computed by 
  \begin{align*}
    H_x =&\ \argmin_{M} \norm{\Gamma_B M + \Gamma_A},\\
    H_\lambda =&\ \basis{\nul{\Gamma_B}}.
  \end{align*}
 Also, $x[\tau] \in \nul{\Psi_x[\tau]}$ where 
  \begin{align*}
    \Psi_x[\tau] = \normalize{\mat{
      \Gamma_B H_x + \Gamma_A\\
      \tdA_{eq}
    }}.
  \end{align*}
\end{theorem}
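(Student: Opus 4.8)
The plan is to show that the alternative formulas compute exactly the same objects as in \thm{feasible_set_unstable}, appealing to \lem{pinv-equal-regression}, \lem{normalization}, and the relationship between $\Psi_x[\tau+1]$ and its null-space basis $\Xi_x[\tau+1]$. The key observation is that $x[\tau+1] \in \nul{\Psi_x[\tau+1]}$ means $x[\tau+1] = \Xi_x[\tau+1]\lambda$ for some $\lambda$, so the defining equation $x[\tau+1] = \tdA x[\tau] + \tdB u[\tau]$ can be written as $\tdB u[\tau] - \Xi_x[\tau+1]\lambda = -\tdA x[\tau]$, i.e. $\Gamma \mat{u[\tau]\\ \lambda} = -\tdA x[\tau]$, which is the system underlying \thm{feasible_set_unstable}. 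The alternative route instead left-multiplies the dynamics by $\Psi_x[\tau+1]$ to annihilate the $x[\tau+1]$ term: $\Psi_x[\tau+1]\tdB u[\tau] = -\Psi_x[\tau+1]\tdA x[\tau]$, i.e. $\Gamma_B u[\tau] = -\Gamma_A x[\tau]$.

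First I would establish the formula for $H_x$. The original $H_x = \mat{I & 0}\Gamma^+\tdA$ picks out the $u$-component of the minimum-norm solution to $\Gamma \mat{u\\ \lambda} = \tdA x[\tau]$; by \lem{pinv-equal-regression} this is the $u$-block of $\argmin_{M}\norm{\Gamma M - \tdA}$. I would argue that minimizing $\norm{\Gamma M - \tdA} = \norm{\tdB M_u - \Xi_x[\tau+1]M_\lambda - \tdA}$ over $M_u, M_\lambda$ jointly is, for the optimal $M_u$, equivalent to the condition that $\tdB M_u + \tdA$ lies as close as possible to $\text{range}(\Xi_x[\tau+1]) = \nul{\Psi_x[\tau+1]}$, which (since $\Psi_x[\tau+1]$ has orthonormal rows after normalization, so $\Psi_x[\tau+1]^\top$ projects onto the orthogonal complement) amounts to minimizing $\norm{\Psi_x[\tau+1](\tdB M_u + \tdA)} = \norm{\Gamma_B M_u + \Gamma_A}$. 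Applying \lem{pinv-equal-regression} again gives $H_x = \argmin_M \norm{\Gamma_B M + \Gamma_A}$. For $H_\lambda$: the original $H_\lambda = \mat{I & 0}(I - \Gamma^+\Gamma)$ parameterizes the $u$-component of the homogeneous solutions of $\Gamma\mat{u\\ \lambda}=0$; a $u$ arises this way iff there exists $\lambda$ with $\tdB u = \Xi_x[\tau+1]\lambda$, i.e. iff $\tdB u \in \nul{\Psi_x[\tau+1]}$, i.e. iff $\Gamma_B u = 0$. Hence the column space of $H_\lambda$ equals $\nul{\Gamma_B}$, so we may take $H_\lambda = \basis{\nul{\Gamma_B}}$ (the admissible set only depends on the span, so any basis works).

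Finally, for the new $\Psi_x[\tau]$: substituting $\hat u = H_x x[\tau] + H_\lambda\lambda$ back into the dynamics, $x[\tau+1] = (\tdA + \tdB H_x)x[\tau] + \tdB H_\lambda\lambda$. Since $\tdB H_\lambda$ maps into $\nul{\Psi_x[\tau+1]}$ by construction of $H_\lambda$, we need $\Psi_x[\tau+1]$ applied to the remaining term to vanish for $x[\tau+1] \in \nul{\Psi_x[\tau+1]}$: $\Psi_x[\tau+1](\tdA + \tdB H_x)x[\tau] = (\Gamma_A + \Gamma_B H_x)x[\tau] = 0$. Combined with the standing requirement $x[\tau] \in \nul{\tdA_{eq}}$ from \eqn{of-transition-constraint}, \lem{union-of-null-space} gives $x[\tau] \in \nul{\mat{\Gamma_B H_x + \Gamma_A\\ \tdA_{eq}}}$, and \lem{normalization} lets us replace this matrix by its normalization without changing the null space, matching the claimed $\Psi_x[\tau]$ and keeping the rows unit-norm for numerical stability. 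The main obstacle I anticipate is the $H_x$ step: one must be careful that the joint minimization over $(M_u, M_\lambda)$ really does decouple so that the optimal $M_u$ is characterized purely by the projected residual $\norm{\Gamma_B M_u + \Gamma_A}$ — this uses that $\Xi_x[\tau+1]$ and $\Psi_x[\tau+1]^\top$ have complementary ranges — and also that $\argmin$ over $M$ may fail to be unique, so the statement should be read as "some minimizer works," consistent with how $\Acal_u[\tau]$ is used.
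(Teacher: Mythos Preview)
Your argument reaches the right conclusions, but it is more circuitous than the paper's and has one soft spot. The paper does \emph{not} attempt to match the new $H_x,H_\lambda$ against the Theorem~\ref{thm:feasible_set_unstable} formulas. Instead it simply left-multiplies the dynamics by $\Psi_x[\tau+1]$ to obtain the single linear equation $\Gamma_B u[\tau] = -\Gamma_A x[\tau]$, then writes down the standard pseudo-inverse parameterization $u[\tau]=-\Gamma_B^{+}\Gamma_A x[\tau]+(I-\Gamma_B^{+}\Gamma_B)\lambda'$, invokes \lem{pinv-equal-regression} to get the regression form of $H_x$, reparameterizes the homogeneous part by $\basis{\nul{\Gamma_B}}$, and reads off the solvability condition $(\Gamma_B H_x+\Gamma_A)x[\tau]=0$; combining that with $\tdA_{eq}x[\tau]=0$ via \lem{union-of-null-space} and \lem{normalization} gives $\Psi_x[\tau]$. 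In other words, the paper re-derives $\Acal_u[\tau]$ directly from the null-space constraint on $x[\tau+1]$, without touching $\Xi_x[\tau+1]$ or $\Gamma$ at all.

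Your route---showing the Theorem~\ref{thm:feasible_set} formulas agree with Theorem~\ref{thm:feasible_set_unstable} through the complementary ranges of $\Xi_x[\tau+1]$ and $\Psi_x[\tau+1]^\top$---is a legitimate alternative, and your treatment of $H_\lambda$ and $\Psi_x[\tau]$ matches the paper almost verbatim. The one technical wrinkle is in the $H_x$ step: the identity $\norm{(I-P_{\Xi_x})v}=\norm{\Psi_x[\tau+1]v}$ that underlies ``minimizing the projected residual equals minimizing $\norm{\Gamma_B M_u+\Gamma_A}$'' requires $\Psi_x[\tau+1]$ to have \emph{orthonormal} rows, whereas $\normalize{\cdot}$ only scales each nonzero row to unit length. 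Hence the two least-squares problems need not share the same $\argmin$ in general. You correctly note that only the affine set $\Acal_u[\tau]$ matters, and indeed both routes characterize the same solution set of $\Gamma_B u=-\Gamma_A x$; but then the cleanest fix is exactly what the paper does---work from that equation directly and avoid the comparison with Theorem~\ref{thm:feasible_set_unstable} altogether.
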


\begin{proof}
Since $x[\tau+1] \in \nul{\Psi_x[\tau+1]}$, we have
\begin{align*}
\Psi_x[\tau+1] x[\tau+1] =&\
\Psi_x[\tau+1] \left(\tdA x[\tau] + \tdB u[\tau] \right) \\
=&\ \Gamma_A x[\tau] + \Gamma_B u[\tau] = 0.
\end{align*}
Therefore, we know 
\begin{align*}
 \Gamma_B u[\tau&] = - \Gamma_A x[\tau]\\
\Rightarrow u[\tau] =&\ - \Gamma_B^+ \Gamma_A x[\tau] + (I - \Gamma_B^+ \Gamma_B) \lambda'\\
=&\ H_x x[\tau] + H_{\lambda'} \lambda'.
\end{align*}
$H_x$ can then be computed by \lem{pinv-equal-regression}. On the other hand, we know $\Gamma_B H_{\lambda'} \lambda' = 0$. Therefore, $H_{\lambda'} \lambda' \in \nul{\Gamma_B}$, and we can reparameterize the space by setting $H_\lambda = \basis{\nul{\Gamma_B}}$. Lastly, the solution to the above equation exists if and only if
\begin{align*}
(- \Gamma_B \Gamma_B^+ \Gamma_A + \Gamma_A) x[\tau] = (\Gamma_B H_x + \Gamma_A) x[\tau] = 0.
\end{align*}
Therefore, $x[\tau] \in \nul{\Gamma_B H_x + \Gamma_A}$. Along with $x[\tau] \in \nul{\tdA_{eq}}$, we can derive $\Psi_x[\tau]$ via \lem{union-of-null-space} and \lem{normalization}.
\end{proof}

\thm{feasible_set} provides a numerically more stable procedure to compute $H_x$ and $H_\lambda$, and while we leave this proof for future work, our experiments indicate that
\begin{itemize}
\item $H_x$ can be computed by regression, which can be done via some sophisticated algorithms;
\item $H_\lambda$ have a smaller dimension;
\item the normalized $\Psi_x$ is less likely to suffer resolution issues. 
\end{itemize}

\paragraph{Initial condition} Under the state-feedback scenario in \cite{tseng2020system}, we only need to set $\Pxx[1] = I$ at the end to enforce the initial condition. However, under our output-feedback setting, we have two sets of initial conditions. We need to ensure both $x[0]$ and $x[1]$ in \eqn{of-initial-condition}. Therefore, we partition the whole backward recursion into two phases: from $T$ to $1$ and from $1$ to $0$. For the former phase, we derive $\Acal_u[\tau]$ and $\Psi_x[\tau]$ recursively according to \thm{feasible_set} until we obtain $\Psi_x[1]$. In the next phase, we derive $\Acal_u[0]$ using the following theorem.

\begin{theorem}\label{thm:feasible_set0}
  Suppose $x[1] \in \nul{\Psi_x[1]}$ for some given matrix $\Psi_x[1]$ where
  $
  \Psi_x[1] = \mat{ \Psi_{x_1}[1] & \Psi_{x_2}[1] }
  $
  such that 
  \begin{align*}
  \Psi_{x_1}[1] x_{xx}[1] + \Psi_{x_2}[1] \mat{x_{xy}[1] \\ x_{ux}[1]} = 0.
  \end{align*}
   We have
  \begin{align*}
    \Acal_u[0] = \{ \hat{u} : \hat{u} = w + H_\lambda \lambda \}
  \end{align*}
  where 
  \begin{align*}
    \Gamma =&\ \Psi_{x_2}[1] \tdB_0, \\ 
    w =&\ \argmin_{M} \norm{\Gamma + \Psi_{x_1}[1] \vec{I}}, \\
    H_\lambda =&\ \basis{\nul{\Gamma}}.
  \end{align*}
\end{theorem}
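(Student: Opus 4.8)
The plan is to mirror the argument in the proof of \thm{feasible_set}: substitute the structured initial condition \eqn{of-initial-condition} into the null-space membership $x[1] \in \nul{\Psi_x[1]}$ inherited from the backward recursion, and read off the admissible set for the one remaining free variable $u[0]$.

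First I would plug the blocks of $x[1]$ from \eqn{of-initial-condition} --- namely $x_{xx}[1] = \vec{I}$ and $\mat{x_{xy}[1] \\ x_{ux}[1]} = \tdB_0\, u[0]$ --- into the partitioned condition $\Psi_{x_1}[1] x_{xx}[1] + \Psi_{x_2}[1]\mat{x_{xy}[1] \\ x_{ux}[1]} = 0$. This collapses to the affine linear equation $\Psi_{x_2}[1]\tdB_0\, u[0] = -\Psi_{x_1}[1]\vec{I}$, i.e., $\Gamma u[0] = -\Psi_{x_1}[1]\vec{I}$. Compared with \thm{feasible_set}, the role of $\Gamma_B$ is now played by $\Gamma = \Psi_{x_2}[1]\tdB_0$, while $x[0]=0$ eliminates the term linear in the state and the constant vector $-\Psi_{x_1}[1]\vec{I}$ (arising from $\Pxx[1]=I$) produces an affine offset instead.

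Next I would solve this system exactly as in \thm{feasible_set}: its general solution is $u[0] = -\Gamma^+ \Psi_{x_1}[1]\vec{I} + (I - \Gamma^+\Gamma)\lambda'$. By \lem{pinv-equal-regression} the particular term $-\Gamma^+\Psi_{x_1}[1]\vec{I}$ equals $\argmin_M \norm{\Gamma M + \Psi_{x_1}[1]\vec{I}} = w$, and the homogeneous term $(I - \Gamma^+\Gamma)\lambda'$ ranges precisely over $\nul{\Gamma}$, which by \lem{null-space-is-subspace} can be reparameterized as $H_\lambda \lambda$ with $H_\lambda = \basis{\nul{\Gamma}}$. Combining the two pieces gives $\Acal_u[0] = \{\hat u : \hat u = w + H_\lambda \lambda\}$, as claimed.

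The main obstacle is the consistency of $\Gamma u[0] = -\Psi_{x_1}[1]\vec{I}$: a solution exists only if $(\Gamma\Gamma^+ - I)\Psi_{x_1}[1]\vec{I} = 0$. I would dispatch this by appealing to feasibility of the original SLS problem \eqn{SLS_output_feedback} --- if \eqn{SLS_output_feedback} is feasible then some $u[0]$ rendering $x[1] \in \nul{\Psi_x[1]}$ exists by construction of the recursion, so the system is consistent and the formula for $w$ merely selects its minimum-norm representative. A minor point to check along the way is that the recursion delivers $\Psi_x[1]$ already partitioned as $\mat{\Psi_{x_1}[1] & \Psi_{x_2}[1]}$ conformally with the stacking of $x[\tau]$ into $(x_{xx}, x_{xy}, x_{ux})$, which is immediate from the definition of $x[\tau]$.
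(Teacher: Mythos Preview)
Your proposal is correct and follows essentially the same approach as the paper: substitute the structured initial condition into $\Psi_x[1]x[1]=0$, isolate $\Gamma u[0] = -\Psi_{x_1}[1]\vec{I}$, write the pseudo-inverse general solution, and then invoke \lem{pinv-equal-regression} and the null-space reparameterization exactly as in \thm{feasible_set}. You additionally discuss the consistency condition $(\Gamma\Gamma^+-I)\Psi_{x_1}[1]\vec{I}=0$, which the paper leaves implicit, but otherwise the arguments coincide.
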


\begin{proof}
Since $x[1] \in \nul{\Psi_x[1]}$, we have 
\begin{align*}
\Psi_x[1] x[1] = 0 
=&\ \Psi_x[1] \mat{\vec{I}\\ \tdB_0 u[0] }\\
=&\ \Psi_{x_1}[1] \vec{I} + \Psi_{x_2}[1] \tdB_0 u[0].
\end{align*}
Rearrange the terms and we obtain
\begin{align*}
 \Psi_{x_2}[1] \tdB_0 u[0] = \Gamma u[0] = - \Psi_{x_1}[1] \vec{I}.
\end{align*}
Therefore,
\begin{align*}
u[0] = - \Gamma^+ \Psi_{x_1}[1] \vec{I} + (I - \Gamma^+\Gamma) \lambda.
\end{align*}
We can then show the results following the same procedure in the proof of \thm{feasible_set}.
\end{proof}

Combining both \thm{feasible_set} and \thm{feasible_set0}, we can derive admissible input sets $\Acal_u[\tau]$ for all $\tau = 0, \dots, T$. Let $\unvec{x}$ rebuild a matrix $A$ such that $\vec{A} = x$, we present our DP output-feedback SLS in \alg{plainSLS}.

\begin{algorithm}
\caption{DP for output feedback SLS}\label{alg:plainSLS}
\begin{algorithmic}[1]
    \REQUIRE Per-step costs $g_{\tau}(\Pxx[\tau],\Pxy[\tau],\Pux[\tau],\Puy[\tau])$ for all $\tau = 0, \dots, T$ and the matrices $A$, $B$, and $C$ in \eqn{SLS_output_feedback}.
    \ENSURE $\Pxx[\tau],\Pxy[\tau],\Pux[\tau],\Puy[\tau]$ for all $\tau = 0, \dots, T$.
    \STATE Derive $h_\tau$ from $g_{\tau}$ for all $\tau = 0, \dots, T$.\label{plainSLS-setup0}
    \STATE Derive $\tdA$, $\tdB$, $\tdA_{eq}$, and $\tdB_0$ from $A$, $B$ and $C$ according to \eqn{SLS_output_feedback}.
    \STATE $\Psi_x[T+1] = I$.
    \STATE $V_{T+1}(x[T+1]) = 0$.\label{plainSLS-setup1}
    \FOR{$\tau=T,\dots,1$}
        \STATE Derive $\Acal_u[\tau]$ and $\Psi_x[\tau]$ by \thm{feasible_set}.\label{plainSLS-compute0}
        \STATE Compute $K_{\tau}(x[\tau])$ by \eqn{DP-U}.
        \STATE Derive $V_{\tau}(x_{\tau})$ by \eqn{DP-cost-to-go}.\label{plainSLS-compute1}
    \ENDFOR
    \STATE Derive $\Acal_u[0]$ by \thm{feasible_set0}.\label{plainSLS-final0}
    \STATE Compute $u[0]$ by \eqn{DP-U}.
	\STATE Set $\Puy[0] = \unvec{u[0]}$ and $\Pxx[0] = \Pxy[0] = \Pux[0] = 0$.
    \STATE Compute $x[1]$ by \eqn{of-initial-condition}.    
    \FOR{$\tau=1,\dots,T$}\label{plainSLS-final0.0}
        \STATE $u[\tau] = K_{\tau}(x[\tau]).$
        \STATE Set $\Pxx[\tau] = \unvec{x_{xx}[\tau]}$, $\Pxy[\tau] = \unvec{x_{xy}[\tau]}$, $\Pux[\tau] = \unvec{x_{ux}[\tau]}$, and $\Puy[\tau] = \unvec{u[\tau]}$.
		\STATE $x[\tau+1] = \tdA x[\tau] + \tdB u[\tau]$.        
    \ENDFOR\label{plainSLS-final1}
\end{algorithmic}
\end{algorithm}





\subsection{Approximation for Faster Computation}
Though \thm{feasible_set} has greatly improved the numerical stability and simplified computation over \thm{feasible_set_unstable}, calculating the matrices in \thm{feasible_set} is still computationally expensive. Therefore, we explore the possibility of an approximation algorithm which trades the precision for faster computation. Inspired by the approximation to the infinite horizon SLS problem
in \cite{tseng2020system}, we can relax \eqn{of-transition-constraint} and \eqn{of-boundary-condition} by using
\begin{align*}
H_x = 0, \quad H_\lambda = I,
\end{align*}
which is equivalent to an unconstrained $\Acal_u$, and reuse $\Psi_x[T_a]$ as $\Psi_x[1]$ for 
\thm{feasible_set0} where $T_a$ is the \emph{allowance}, that is, the number of time steps at which we use
an unconstrained $\Acal_u$. We summarize this approximation in \alg{approxDP}. We observe imperically that the null space defining $\Acal_u$ is close enough to $\Rmbb^n$ that it is practically useful; we defer proof of this to future work.

\begin{algorithm}
\caption{DP Approximation for output feedback SLS}\label{alg:approxDP}
\begin{algorithmic}[1]
    \REQUIRE Per-step costs $g_{\tau}(\Pxx[\tau],\Pxy[\tau],\Pux[\tau],\Puy[\tau])$ for all $\tau = 0, \dots, T$, the matrices $A$, $B$, and $C$ in \eqn{SLS_output_feedback}, and allowance $T_a$.
    \ENSURE $\Pxx[\tau],\Pxy[\tau],\Pux[\tau],\Puy[\tau]$ for all $\tau = 0, \dots, T$.
    \STATE Follow line \ref{plainSLS-setup0} to \ref{plainSLS-setup1} in \alg{plainSLS}.
    \FOR{$\tau=T,\dots,T_a$}
        \STATE Perform line \ref{plainSLS-compute0} to \ref{plainSLS-compute1} in \alg{plainSLS}.
    \ENDFOR
	
    \FOR{$\tau=T_a,\dots,1$}
        \STATE Compute $K_{\tau}(x[\tau])$ by \eqn{DP-U} with unconstrained $\Acal_u[\tau]$.
        \STATE Derive $V_{\tau}(x_{\tau})$ by \eqn{DP-cost-to-go}.
    \ENDFOR
    \STATE Let $\Psi_x[1] = \Psi_x[T_a]$.
    \STATE Perform line \ref{plainSLS-final0} to \ref{plainSLS-final1} in \alg{plainSLS}.
\end{algorithmic}
\end{algorithm}

\section{Example Algorithms}\label{sec:examples}
Now that we have defined the dynamic programming algorithms, we illustrate results for two classes of objective functions.
We show algorithms for computing the optimal control under two cost functions: a $\mathcal{H}_2$ and a quadratic cost.
Define the $\mathcal{H}_2$ cost to be 
\begin{align*} 
    h_{\tau}(x[\tau],u[\tau]) = \norm{ Fx[\tau]+Gu[\tau] }_F^2,
\end{align*}
where $\norm{\cdot}_F$ is the Frobenius norm, and the quadratic cost to be 
\begin{align*} 
    h_{\tau}(x[\tau],u[\tau]) = x[\tau]^\top  Q x[\tau] + u[\tau]^\top  R u[\tau],
\end{align*}
where $Q$ is a positive definite matrix weighing the cost of elements of the state. Note that the $\Hcal_2$ objective is a generalization of the 
quadratic objective where cross terms between $x$ and $u$ are permitted.

\subsection{$\mathcal{H}_2$ Objective}
Using the reformulated SLS contraint and optimization equations \eqn{reformulated_opt},
we implement a dynamic programming algorithm by deriving an explicit 
expression for the cost-to-go function from the objective function $g_{\tau}$. For $\tau > 0$, we have:
\begin{align*} 
V_{\tau}(x[\tau]) = \min\limits_{\hat{u} \in \Acal_u[\tau]} \norm{ Fx[\tau]+ G \hat{u} }_F^2 +
V_{\tau+1}(f(x[\tau],\hat{u})).
\end{align*}
Using the form of $U$ from Theorem 1 and the claim in equation 12 in \cite{tseng2020system}, 
we know that the cost-to-go function can be parameterized by some matrix $P$.
\begin{equation*}
    \begin{aligned}
        V_{\tau}(x[\tau]) = \min_{\lambda} \{ \norm{ F_x x[\tau] + G_{\lambda} \lambda }_F^2 + \\
        \sum_{t=\tau+1}^\top  \norm{ P[\tau] (A_x x[\tau] + B_{\lambda} \lambda ) }_F^2 \} \\
    \end{aligned}
\end{equation*}
where
\begin{align*}
    A_x =&\ A + BH_x, & B_{\lambda}=&\ BH_{\lambda},\\
    F_x =&\ F + GH_x, & G_{\lambda}=&\ GH_{\lambda}.
\end{align*}

After solving for the $\lambda$ that minimizes the function, which then gives a 
controller as a function of the state, the cost-to-go is given by
\begin{equation*}
    \begin{aligned}
        V_{\tau}(x[\tau]) = \min_{\lambda} \{ \norm{ (F + G K[\tau])x[\tau] }_F^2 + \\
        \sum_{t=\tau+1}^\top  \norm{ P[\tau] (A + BK[\tau])x[\tau] }_F^2 \}.
    \end{aligned}
\end{equation*}
where 
\begin{gather}
        K[\tau] = H_x + H_{\lambda} L[\tau] \label{eqn:Kt_H2} \\
        L[\tau] = -(G_{\lambda}^\top G_{\lambda}+B_{\lambda}^\top P[\tau]B_{\lambda})^{-1}(G_{\lambda}^\top  F_x +B_{\lambda}^\top  P[\tau]A_x). \nonumber
\end{gather}
\cite{tseng2020system} shows how to compute $P[\tau]$; we state the result here:
\begin{align}\label{eqn:update_p}
    \begin{gathered}
        P[\tau-1] = (C+GK[\tau])^\top (C+GK[\tau]) \\
    + (A+BK[\tau])^\top P[\tau](A+BK[\tau]).
    \end{gathered}
\end{align}
The derivation changes from the state feedback scenario for $\tau=0$. The SLS constraints 
that hold for the previous transitions change, so the cost-to-go is instead given as an optimization over
a control input that influences $\Pxy$, $\Pux$, and $\Puy$. Recall $P[1]$ as the matrix defining the cost-to-go
at the next state, where the cost-to-go at state $x[1]$ is $V_1(x[1]) = \norm{ P[1] x[1] }_F^2$.
\begin{equation*} 
    \begin{aligned}
        V_0(x[0]) = \min_{\hat{u} \in \Acal_u[0]} \{ \norm{ G \hat{u} }_F^2 + \norm{ P[1]\mat{\Vec{I} \\ \tilde{B}_0\hat{u}}  }_F^2 \}
    \end{aligned}
\end{equation*}
Now we must consider the form that $u[0]$ will take. We showed in \thm{feasible_set0} that 
$\Acal_u[0]$ consists of vectors of the form $\hat{u} = H_{\lambda}\lambda + w$. We can write the cost-to-go as
\begin{equation*}
    \begin{aligned}
        V_0(x[0]) = \min_{\lambda} \{ \norm{ G (H_{\lambda}\lambda + w) }_F^2 + \\
        \norm{ P[1]\mat{\Vec{I}\\ \tilde{B}_0(H_{\lambda}\lambda + w)} }_F^2 \}
    \end{aligned}
\end{equation*}
and the control parameterization as
\begin{equation}\label{eqn:K0_H2}
    \begin{gathered}
        u[0] = H_{\lambda}\lambda^* + w,\\
        \lambda^* = \argmin_{\lambda} \{ \norm{ G (H_{\lambda}\lambda + w) }_F^2 + \\
        \norm{ P[1] \mat{\Vec{I}\\ \tilde{B}_0(H_{\lambda}\lambda + w)} }_F^2 \}.
    \end{gathered}
\end{equation}
The solution can be found by setting the derivative equal to zero and solving for $\lambda^*$. The steps are shown 
in \alg{H2}.

\begin{algorithm}
    \caption{DP with $\mathcal{H}_2$ Objective}
    \label{alg:H2}
    \begin{algorithmic}[1]
    \REQUIRE Per-step costs $g_{\tau}(\Pxx[\tau],\Pxy[\tau],\Pux[\tau],\Puy[\tau])$ for all $\tau = 0, \dots, T$ and the matrices $A$, $B$, and $C$ in \eqn{SLS_output_feedback}; $F$ and $G$.
    \ENSURE $\Pxx[\tau],\Pxy[\tau],\Pux[\tau],\Puy[\tau]$ for all $\tau = 0, \dots, T$.
    \STATE Derive $h_\tau$ from $g_{\tau}$ for all $\tau = 0, \dots, T$.
    \STATE Derive $\tdA$, $\tdB$, $\tdA_{eq}$, and $\tdB_0$ from $A$, $B$ and $C$ according to \eqn{SLS_output_feedback}.
    \STATE $\Psi_x[T+1] = I$.
    \STATE $P[T]=0$. 
    \FOR{$\tau=T,\dots,1$}
    \STATE Derive $\mathcal{A}_u$ and $\nullspace(\Psi_x[\tau])$ by \thm{feasible_set}.
    \STATE Compute $K_{\tau}(x_{\tau})$ by \eqn{Kt_H2}.
    \STATE Compute $P[\tau-1]$ from $P[\tau]$ by \eqn{update_p}.
    \ENDFOR
    \STATE Derive $\Acal_u[0]$ by \thm{feasible_set0}.
    \STATE Compute $u[0]$ by \eqn{K0_H2}.
	\STATE Set $\Puy[0] = \unvec{u[0]}$ and $\Pxx[0] = \Pxy[0] = \Pux[0] = 0$.
    \STATE Compute $x[1]$ by \eqn{of-initial-condition}.
    \STATE Perform line \ref{plainSLS-final0.0} to \ref{plainSLS-final1} in \alg{plainSLS}.
    \end{algorithmic}
\end{algorithm}

\subsection{Quadratic Objective}
Now we consider the algorithm under the quadratic objective. We derive an explicit cost-to-go function under 
this objective as
\begin{align*}
V_{\tau}(x[\tau]) = \min\limits_{\hat{u} \in \Acal_u[\tau]} x[\tau]^\top  Q x[\tau] + \hat{u}^\top  R \hat{u} + V_{\tau+1}(f(x[\tau],\hat{u})).
\end{align*}
Starting from $\tau = T$ and recursively computing backward, from \cite{tseng2020system} we have that $V_{\tau}(x[\tau])$ can be written as $x[\tau]^\top  V_{(\tau)} x[\tau]$.
We know the future cost at time $\tau=T$ is zero, and $u[T]$ takes the form $u[T]=H_{\lambda}\lambda + H_x x[T]$, so we can write the overall cost-to-go as 
\begin{align*}
    \begin{gathered}
        V_{T}(x[T])
        = \min\limits_{\lambda} x[T]^\top  Q x[T] \\
        + (H_{\lambda}\lambda + H_x x[T])^\top  R (H_{\lambda}\lambda + H_x x[T]).
    \end{gathered}
\end{align*}
After solving for the value of $\lambda$ that minimizes $V_T(x[T])$, and assuming that $R$ is symmetric, we can write the cost as
\begin{align*}
    \begin{gathered}
        V_T(x[T]) = x[T]^\top  V_{(T)} x[T] \\
        \text{where }\ V_{(T)}= Q + \bar{Q} \\
        \bar{Q} = \tilde{Q}^\top  R \tilde{Q} \\
        \tilde{Q} = I-H_{\lambda}(H_{\lambda}^\top RH_{\lambda})^{-1}H_x.
    \end{gathered}
\end{align*}
Each $V_{\tau}(x[\tau])$ can also be written in this quadratic form. Pugging in the form of $u[\tau]$ and the 
quadratic form of the future cost-to-go function, we set the derivative equal to zero and solve for 
the $\lambda$ which minimizes $V_\tau(x[\tau])$, resulting in the following definitions for 
$V_{\tau}(x[\tau])$ and $K[\tau]$:
\begin{align}\label{eqn:V_LQ}
    \begin{gathered}
        V(x[\tau]) = x[\tau]^\top  V_{(\tau)} x[\tau]\ \ \text{where }\\
        V_{(\tau)} = Q + K[\tau]^\top  R K[\tau] + \\
        (\tilde{A}+\tilde{B}K[\tau])^\top  V_{(\tau+1)}(\tilde{A}+\tilde{B}K[\tau])
    \end{gathered}
\end{align}
and
\begin{align}\label{eqn:K_LQ}
    \begin{gathered}
        K[\tau] = H_x - H_{\lambda} L_d^{-1} L_n\ \ \text{where } \\
        L_d = H_{\lambda}^\top  R H_{\lambda} + (\tilde{B} H_{\lambda})^\top  V_{(t+1)} \tilde{B} H_{\lambda} \\
        L_n = H_{\lambda}^\top  R H_x + (\tilde{B} H_{\lambda})^\top  V_{(t+1)} (\tilde{A}+\tilde{B}H_x).
    \end{gathered}
\end{align}
Lastly, we compute the control input for the special case where $\tau = 0$. The input $u[0]$ takes the form 
from \thm{feasible_set0}, and we know the state at time $t=0$ is zero. We have the control  given by 
\begin{align}\label{eqn:K0_LQ}
    \begin{gathered}
        u[0] = H_{\lambda}\lambda^* + w \\
        \lambda^* = \argmin_{\lambda} (H_{\lambda}\lambda + w)^\top  R (H_{\lambda}\lambda + w) \\
        + \mat{\vec{I}\\ \tilde{B}_0(H_{\lambda}\lambda + w)}^\top  V_{(1)} \mat{\vec{I}\\ \tilde{B}_0(H_{\lambda}\lambda + w)}
    \end{gathered}
\end{align}
The steps are enumerated in \alg{LQ}.
\begin{algorithm}
    \caption{DP with Quadratic Objective}
    \label{alg:LQ}
    \begin{algorithmic}[1]
    \REQUIRE Per-step costs $g_{\tau}(\Pxx[\tau],\Pxy[\tau],\Pux[\tau],\Puy[\tau])$ for all $\tau = 0, \dots, T$ and the matrices $A$, $B$, $C$ in \eqn{SLS_output_feedback}; $Q$ and $R$.
    \ENSURE $\Pxx[\tau],\Pxy[\tau],\Pux[\tau],\Puy[\tau]$ for all $\tau = 0, \dots, T$.
    \STATE Derive $h_\tau$ from $g_{\tau}$ for all $\tau = 0, \dots, T$.
    \STATE Derive $\tdA$, $\tdB$, $\tdA_{eq}$, and $\tdB_0$ from $A$, $B$ and $C$ according to \eqn{SLS_output_feedback}.
    \STATE $\Psi_x[T+1] = I$.
    \STATE $V_{T+1}(x[T+1]) = 0$.
    \FOR{$\tau=T,\dots,1$}
        \STATE Derive $\Acal_u[\tau]$ and $\Psi_x[\tau]$ by \thm{feasible_set}.
        \STATE Compute $K_{\tau}(x[\tau])$ by \eqn{K_LQ}.
        \STATE Derive $V_{\tau}(x_{\tau})$ by \eqn{V_LQ}.
    \ENDFOR
    \STATE Derive $\Acal_u[0]$ by \thm{feasible_set0}.
    \STATE Compute $u[0]$ by \eqn{K0_LQ}.
	\STATE Set $\Puy[0] = \unvec{u[0]}$ and $\Pxx[0] = \Pxy[0] = \Pux[0] = 0$.
    \STATE Compute $x[1]$ by \eqn{of-initial-condition}.
    \STATE Perform line \ref{plainSLS-final0.0} to \ref{plainSLS-final1} in \alg{plainSLS}.
    \end{algorithmic}
\end{algorithm}

\section{Evaluation}\label{sec:evaluation}
To evaluate the performance of our algorithm, we compare the time required to run our nominal algorithm and
approximation algorithm with the solution given by CVX. We construct a set of stochastic chain matrices and time
 the computations for the $\mathcal{H}_2$ and quadratic objectives for the CVX algorithm, dynamic programming (\alg{H2} and \alg{LQ}), and their corresponding approximation algorithm (in the form of \alg{approxDP}). 
Additionally, we determine in which scenarios the approximation algorithm produces transfer functions that successfully attenuate disturbances. 
The data was generated on a desktop with an AMD Ryzen 7 3700X processor (16 logical cores) and 32 GB DDR4 memory; the CVX code was taken from SLSpy \cite{SLSpy}.

\subsection{Evaluation Setup}
The test system is a stochastic chain with state dimension $N_x$, input dimension $N_u$, and observation dimension $N_y$, which takes the form
\begin{align*}
    A = \begin{bmatrix}
        1-\alpha & \alpha & & & \\
        \alpha & 1-2\alpha & \alpha & & \\
        & \ddots & \ddots & \ddots & \\
        & & \alpha & 1-2\alpha & \alpha \\
        & & & \alpha & 1-\alpha
    \end{bmatrix}\\
    B = \begin{bmatrix}
        I_{N_u} \\
        0
    \end{bmatrix}\ \ 
    C = \begin{bmatrix}
        I_{N_y} & 0
    \end{bmatrix}
\end{align*}
where $\alpha \in (0,1)$ and $I_{N\cdot}$ is the identity matix of size $N\cdot$. We set $D$ to zero and do not use a constraint set. 
We construct the test cases to span the size ranges $N_x \in [5,20]$ with $N_u = N_y = N_x$ and $T=10$. 
Let $T$ be the length of the finite impulse response (FIR) horizon; we run scenarios with $T \in [10,25]$ with $N_x=N_u=N_y=10$. For both tests, we run 50 simulations per data point.
The computing time is defined as the time required to 
compute \alg{H2} or \alg{LQ}, which includes constructing the reformulated matrices $\tilde{A}$, $\tilde{B}$ and new cost function $h_{\tau}$. Analogously, the
computing time for CVX is the time required to synthesize the stablizing controller under \alg{H2} or \alg{LQ}.
\subsection{Computation Time and Scalability}
We plot the computing time as a function of each problem dimension ($N_x$ and $T$) for the $\mathcal{H}_2$ and quadratic objectives
to compare the algorithms' scalability. \fig{timing_all} illustrates how the DP algorithm reduces the computation time compared with
using CVX; for large values of $N_x$, DP is more than 5 times faster than CVX and for large $T$ it is at least 6 time faster. The greatest improvement is computed 
as over 7 times faster for $N_x$ for the quadratic objective. We also observe that the time increases more slowly with increasing system dimensions
compared with CVX for larger system dimensions, showing the scalability of our methods.

\begin{figure}
    \includegraphics[scale=0.5]{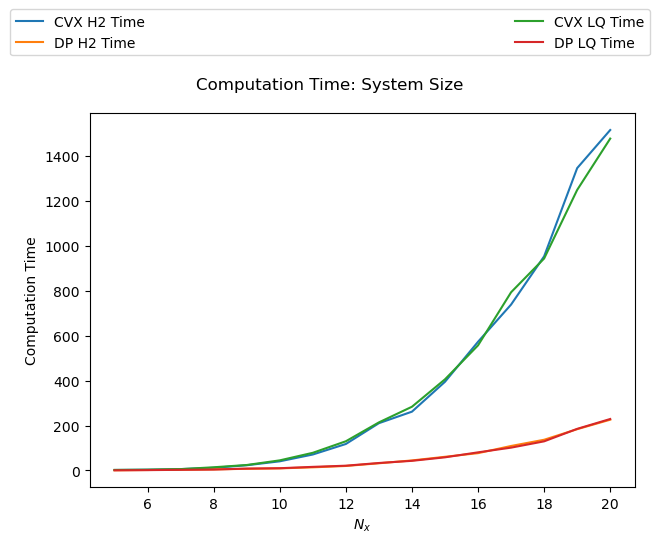}
    \includegraphics[scale=0.5]{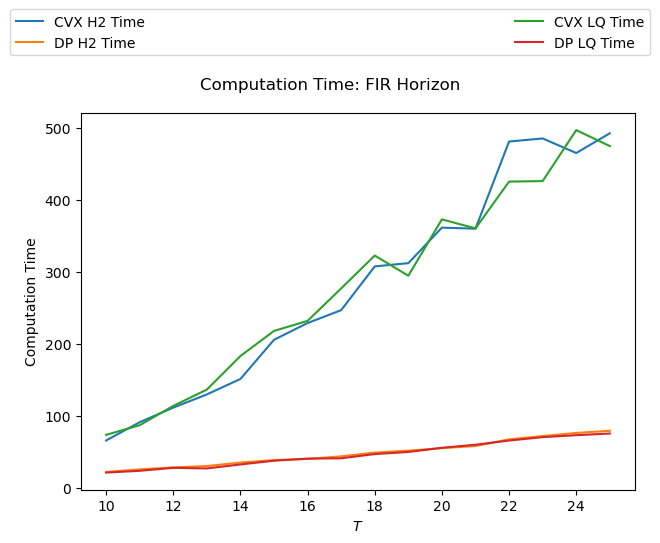}
    \caption{The dymanic programming algorithms outperforms the CVX algorithms for both the $\mathcal{H}_2$ and quadratic objectives.}
    \label{fig:timing_all}
\end{figure}

\subsection{Convergence of Approximation Algorithm}
The approximation algorithm partially removes the constraint requiring that the transfer functions at time $T+1$ are zero, which removes the guarantee 
that disturbances will be killed off in exchange for skipping expensive nullspace computations. We test scenarios with varying values of the allowance $T_a$, which is the
number of nullspace computations skipped, and record whether the transfer functions properly terminate disturbances within the finite impulse response time. 
In \fig{time_saved_skipped_iterations}, we see that avoiding computations of $\Acal_u$ for every time step in the FIR horizon significantly
improves the overall computation time relative to the plain DP algorithm; run time decrease by 42\% to 68\% for the largest allowances, and we see the largest percentage improvements for systems with the largest FIR horizon.
For all $T\in[10,25]$, for all $T - T_a \leq 3$, and for each of the 50 test simulations, we verified that the solution was in fact optimal.

\begin{figure}
    \includegraphics[scale=0.5]{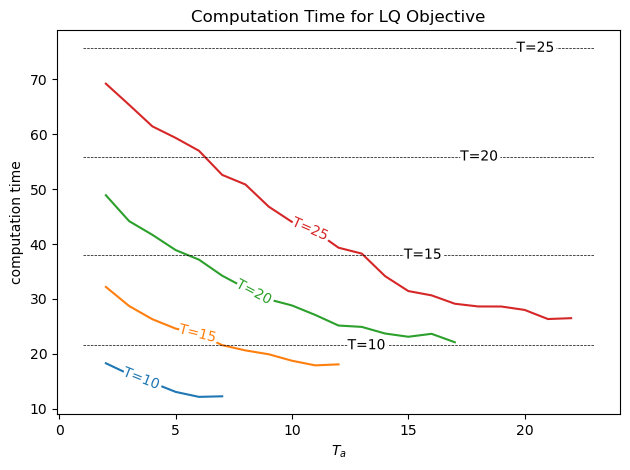}
    \includegraphics[scale=0.5]{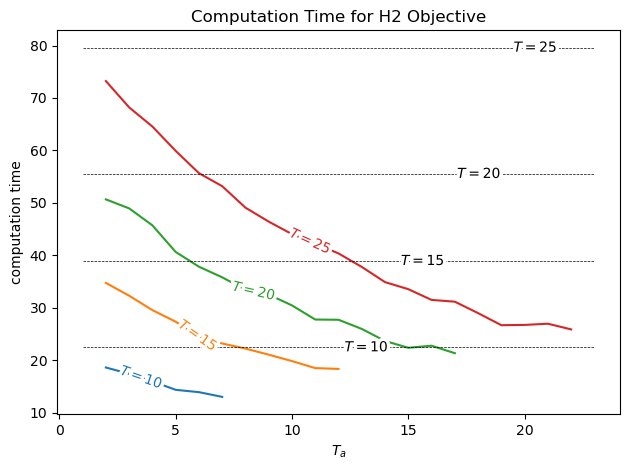}
    \caption{As the number of steps at which $H_X$ and $H_{\Lambda}$ are not recomputed increases, the time needed to compute the solution decreases.
    The dashed lines show the average computation time for the full SLS DP method.}
    \label{fig:time_saved_skipped_iterations}
\end{figure}

\section{Conclusion}\label{sec:conclusion}
We derived DP algorithms to solve output-feedback SLS problems by reformulating the optimization constraints as a linear control problem,
using two stages of DP to enforce boundary conditions and an interior condition. We illustrated specific examples of our DP algorithm on the $\mathcal{H}_2$ and quadratic objectives,
along with computation time results of each. The simulation results show that DP, including the approximation DP, outperforms the traditional CVX solver. 
Future work includes deriving either deterministic or probabilistic guarantees for when the approximation algorithm converges, given broader classes of systems beyond 
stochastic chains. Additionally, we want to investigate the robustness of this method to model uncertainty.

\bibliographystyle{IEEEtran}
\bibliography{%
bib_files/Robust_Control,%
bib_files/Robust_Optimization,%
bib_files/System_Level_Synthesis,%
bib_files/my_publications,%
bib_files/Textbook_and_Theory,%
bib_files/control_sequences,%
bib_files/IEEEabrv%
}

\end{document}